%% file: main.tex
\documentclass[11pt,reqno]{amsart}
\usepackage[a4paper,textwidth=154mm,hcentering,top=28mm,bottom=28mm]{geometry}
\usepackage{amsmath,amssymb,amsthm,booktabs,hyperref}
\hypersetup{colorlinks=true,urlcolor=blue,linkcolor=blue,citecolor=blue,filecolor=blue,pdfborder={0 0 0}}
\numberwithin{equation}{section}
\newtheorem{theorem}{Theorem}[section]
\newtheorem{lemma}[theorem]{Lemma}
\newtheorem{proposition}[theorem]{Proposition}

\theoremstyle{remark}

\theoremstyle{definition}
\newtheorem{question}[theorem]{Question}
\newcommand{\E}{\mathbb E}
\newcommand{\Pp}{\mathbb P}
\newcommand{\ind}{\mathbf 1}
\title[Extremal expectations in card guessing]{Extremal expectations in card guessing with partial feedback: proofs of two conjectures of Diaconis--Graham--Spiro}
\author{Congyi Luo}
\address{School of Data Science, Fudan University, Shanghai 200433, China}
\email{cyluo24@m.fudan.edu.cn}
\keywords{Partial feedback, card guessing, restricted permutations, martingales, extremal expectations}
\date{}
\begin{document}
\begin{abstract}
We study the minimum and maximum expected scores over all adaptive strategies in sequential card guessing with yes/no feedback. A deck contains $n$ labels, each appearing $m$ times, and is shuffled uniformly. The cards are guessed one at a time, and after each guess the player is told only whether it was correct. Always guessing the same label gives exactly $m$ points. How far can the expected score depart from $m$ if all previous feedback is used either to seek or to avoid correct guesses? Diaconis, Graham, and Spiro conjectured that the minimum expectation is $m-o(m)$ as $m\to\infty$ with $n$ sufficiently large in terms of $m$, and that, for each fixed $m$, the limit superior of the maximum expectation as $n\to\infty$ is at most $(e-1)m$. Writing these extrema as $P^-_{m,n}$ and $P^+_{m,n}$, we prove
\[
 0\le m-P^-_{m,n}\le3m^{3/4}
 \qquad(n\ge m\ge1),
\]
and the finite-parameter bound
\[
 \frac{P^+_{m,n}}m\le\sum_{k=1}^{n}\frac1{k!}<e-1
\]
for all positive integers $m,n$. The first bound gives a first-order asymptotic uniform in $n\ge m$; the universal constant $e-1$ in the second cannot be reduced. Thus both conjectures follow. The lower bound combines a switching comparison for restricted permutations with martingale second-moment estimates. For the upper bound, we represent the repeated cards as a random interleaving of independent decks of distinct cards and prove that the optimal expectation is additive when the component-deck labels are revealed.
\end{abstract}
\maketitle
\input{sections/introduction}
\input{sections/model}
\input{sections/upper_single}
\input{sections/upper_bound}
\input{sections/lower_conditional}
\input{sections/lower_martingale}
\input{sections/lower_bound}
\input{sections/discussion}
\clearpage
\appendix
\section*{Appendices}
The two main proofs obtain extremal bounds by comparison, without determining an optimal strategy. Returning to exact optimization leads first to a question about the feedback history: which information must be retained? Appendix~A shows that two vectors of counts suffice, although the resulting recursion still involves many states. Appendix~B turns to decks for which exact answers are available: those in which each label occurs once, or those with only two labels. The latter case also shows that the distinction between yes/no feedback and complete feedback becomes a substantive obstacle only when a third label is present.
\input{sections/appendix_counts}
\input{sections/appendix_feedback}
\clearpage
\input{bibliography}
\end{document}

%% file: sections/introduction.tex
\section{Introduction}
Shuffle a deck with repeated labels uniformly, guess its cards one at a time, and after each guess reveal only whether it was correct. If each label occurs $m$ times, always guessing the same label gives exactly $m$ correct guesses. How far can yes/no feedback move the expected score from this baseline? An upward departure measures the player's ability to use feedback to seek correct guesses; a downward departure measures the ability to use the same information to avoid them.

Neither optimization problem is easy, even with unlimited memory and computation. An incorrect guess does not reveal the label of the card just removed, so the composition of the remaining deck is generally unknown. Moreover, a guess affects the information obtained: the label most likely to be correct now need not maximize the total expectation. Diaconis and Graham~\cite[Section~3]{DG} already exhibited this phenomenon for a deck with three labels, each appearing three times. Finding a simple optimal strategy and bounding the extrema over all strategies are different problems. We address the latter.

For a deck with $n$ labels, each appearing $m$ times, write $P^-_{m,n}$ and $P^+_{m,n}$ for the minimum and maximum expected scores. Diaconis, Graham, and Spiro~\cite[Conjectures~1 and~2]{DGS} proposed two conjectures. The first asserts that, as $m\to\infty$ with $n$ sufficiently large in terms of $m$,
\[
 P^-_{m,n}=m-o(m),
\]
with an error uniform over this range of $n$; see also~\cite[Conjecture~4.5]{DGHS}. The second asserts that, for every fixed $m$ and every $\varepsilon>0$, sufficiently large $n$ satisfies
\[
 P^+_{m,n}\le(e-1)m+\varepsilon.
\]
The first conjecture asks whether yes/no feedback is insufficient to produce an expected loss proportional to $m$, even when avoiding correct guesses is the objective. The second asks for a linear upper bound that controls all multiplicities at once; it does not predict the exact optimum for fixed $m$.

\subsection{Main results}
The two conjectures concern, respectively, a first-order asymptotic for the minimum expectation and a universal upper bound for the maximum expectation. We state these conclusions separately. Throughout, $m$ denotes the number of cards of each label and $n$ the number of labels.
\begin{theorem}[Uniform asymptotics for the minimum expectation]\label{thm:lower}
For all positive integers $m,n$ with $n\ge m$,
\begin{equation}\label{eq:lower}
 0\le m-P^-_{m,n}\le 3m^{3/4}.
\end{equation}
In particular,
\begin{equation}\label{eq:uniform-limit}
 \lim_{m\to\infty}\ \sup_{n\ge m}
 \left|\frac{P^-_{m,n}}{m}-1\right|=0.
\end{equation}
\end{theorem}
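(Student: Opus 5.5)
The trivial bound $P^-_{m,n}\le m$ comes from the constant strategy, so the whole task is the lower bound $P^-_{m,n}\ge m-3m^{3/4}$ for an arbitrary adaptive strategy. The limit \eqref{eq:uniform-limit} then follows at once, since $3m^{3/4}/m\to0$ uniformly in $n\ge m$. I would write the score as $S=\sum_{t=1}^{mn}\ind\{g_t=c_t\}$ and use the Doob decomposition $S=\sum_t p_t+M_{mn}$. Here $p_t=\Pp(c_t=g_t\mid\mathcal F_{t-1})$ is the conditional probability that the $t$-th guess is correct given the yes/no history, and $M$ is a martingale. Taking expectations gives $\E S=\E\sum_t p_t$. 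The task is therefore to show that the predictable sum $\sum_t p_t$ cannot be much smaller than $m$ on average.

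The key steps are as follows.

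First, I would express $p_t$ through the posterior law of the remaining deck. Given the history, the deck is uniform over arrangements consistent with it: the hits fix certain positions, and the misses impose forbidden positions $c_s\neq g_s$. This is a uniform restricted permutation of a multiset.

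Second, I would prove a comparison estimate by switching. Let $r_t(a)$ be the number of cards of label $a$ that are unseen at time $t$, where $a=g_t$. Let $\mu_t(a)$ be the number of earlier misses on guesses of $a$. A switching argument (exchange the card at position $t$ with a card at another position, and count the forbidden positions the exchange can violate) should give
\[
p_t\ \ge\ \frac{\E[r_t(a)\mid\mathcal F_{t-1}]}{N_t}\Bigl(1-O\bigl(\tfrac{\mu_t(a)+\dots}{N_t}\bigr)\Bigr),
\]
where $N_t=mn-t+1$ is the number of remaining cards. The idea is that misses only mildly bias the location of label $a$.

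Third, I would compare $\sum_t r_t(g_t)/N_t$ with $m$. The cleanest route is to fix a label and note that the number of hits on it plus its remaining count is controlled. Alternatively, I would use $\sum_t \ind\{c_t=g_t\}$ against the compensator in reverse: the number of $a$-cards must be exhausted by the end. So the expected loss comes only from the error terms and from fluctuations.

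Fourth, I would control the fluctuations with second moments. I would bound $\E M_{mn}^2=\E\sum_t p_t(1-p_t)\le \E S$, which gives fluctuations of order $\sqrt m$. I would then truncate: once the total number of misses on a label exceeds a threshold of order $m^{3/4}n$-ish, or the number of remaining cards falls below a threshold, I would discard that portion of the game at a cost of $O(m^{3/4})$.

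I would balance the two losses. The switching error contributes roughly $m\cdot \delta$, and the discarded tail contributes roughly $\sqrt{m/\delta}$ or $m^{1/2}\delta^{-1}$. Choosing $\delta=m^{-1/4}$ gives the exponent $3/4$. Keeping the constants explicit should give $3m^{3/4}$; I would treat small $m$ separately via the trivial bound $P^-\ge0$, which already implies the claim whenever $3m^{3/4}\ge m$, that is, $m\le 81$.

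The main obstacle is the switching comparison in the second step: showing that the posterior probability that position $t$ carries label $g_t$ is at least about $r_t(g_t)/N_t$, uniformly over all adaptive histories. Since $n\ge m$, each label is a small fraction of the deck. I would first try a direct bijective switching between configurations with $c_t=a$ and with $c_t=b\neq a$. I would bound the ratio of the counts of consistent arrangements by $1+O(\mu/N_t)$, using the fact that each miss forbids only one label at one position. This is where the hypothesis $n\ge m$ should enter.
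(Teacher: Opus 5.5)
Your outline names the right ingredients (a switching comparison, second moments, truncation, $\delta=m^{-1/4}$), but the two steps that carry the proof are missing.

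\textbf{The comparison in your second step.} It is either empty or false, depending on what $r_t(a)$ means.

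If $r_t(a)$ is the actual number of copies of $a$ among the $N_t$ undealt cards, the undealt positions are unrestricted and exchangeable under the posterior. Hence $p_t=\E[r_t(a)\mid\mathcal F_{t-1}]/N_t$ holds exactly, and the inequality says nothing. The whole problem is then pushed into your third step: you must show that a yes/no strategy cannot steer its guesses toward labels depleted in the remaining deck, i.e.\ $\E\sum_t r_t(g_t)/N_t\ge m-o(m)$. There, ``the $a$-cards are exhausted by the end'' does not help, because copies lying at failed positions of other guesses never score.

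If instead $r_t(a)=m-s_a$ counts the copies not yet located by a hit, then $p_t\gtrsim r_t(a)/N_t$ is false. Guess $1$ in the first $N-n$ rounds and then guess $2$. Now $r_t(2)=m$ and $N_t=n$. The $m-s_1$ unlocated $1$'s all lie in the free positions, so symmetry among labels $2,\dots,n$ gives $p_t=(n-m+s_1)/(n(n-1))\le1/(n-1)$. This is smaller by a factor of order $m$, and no correction $1+O(\mu_t(a)/N_t)$ repairs it (here $\mu_t(2)=0$).

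The denominator must count the positions that can still hold an undetermined copy of the guessed label. The paper deletes the hit positions, leaving $M$ positions, of which $f_i$ forbid $i$ and $L=N_t$ are free. It then swaps a position forbidding some $j\ne i$ with a free position. For every round before the last $B$, this gives $p_t\ge(1-m/L)(m-s_i)/(M-f_i)\ge(1-m/B)q_t$, where $i=g_t$, $q_t=(m-s_i)/(N-a_i)$, and $a_i$ counts earlier guesses of $i$. The loss factor comes from the at most $m$ copies of $j$ among the free positions, so $n\ge m$ plays no role here.

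\textbf{Summing the comparison.} Even with this comparison, you need a device to sum it, since both the numerator and the denominator of $q_t$ depend on the strategy. The paper normalizes each label's deviation as $D_i(t)=(s_i(t)-(m/N)a_i(t))/(1-a_i(t)/N)$. Its increments, at rounds with $g_t=i$, are exactly $c_t(Y_t-q_t)$ with $c_t=N/(N-a_{g_t}(t))\le N/B$. With $T=N-B$, it recovers pathwise $S_T-mT/N=\sum_i(1-a_i(T)/N)D_i(T)$.

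The drift part telescopes to at least $-m^2T/(NB)$. The martingale part need not have mean zero, because the weights depend on the terminal history. It is bounded below by $-(T/B)\sqrt m$ using Cauchy--Schwarz together with $\sum_i(a_i(T)/N)^2\le(T/N)^2$ and $\sum_i\E M_i(T)^2\le(N/B)^2\E S_T\le(N/B)^2m$. The last inequality holds only for a minimizing strategy.

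That is the only place fluctuations matter. In your scheme the Doob martingale $S-\sum_tp_t$ has mean zero and cannot affect $\E S$, so your fourth step has nothing to act on. Your error accounting is correspondingly reversed:
\begin{itemize}
\item Truncating the last $B=\lceil\delta N\rceil$ rounds costs about $m\delta$. The truncation must be in time; discarding rounds by miss counts has no controlled cost, since a minimizer may spend most of the game there.
\item The martingale term costs $\sqrt m/\delta$.
\item The hypothesis $n\ge m$ enters only to make the leftover comparison error $m/(n\delta)$ at most $m^{1/4}$.
\end{itemize}
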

Equation~\eqref{eq:lower} directly bounds the expected loss achievable by avoiding correct guesses, while~\eqref{eq:uniform-limit} expresses the asymptotic conclusion uniformly in $n$. This proves the first conjecture, with the explicit range $n\ge m$ in place of ``$n$ sufficiently large.''
\begin{theorem}[Sharp universal upper bound for the maximum expectation]\label{thm:upper}
For all positive integers $m,n$,
\begin{equation}\label{eq:upper}
 \frac{P^+_{m,n}}{m}\le \sum_{k=1}^{n}\frac1{k!}<e-1.
\end{equation}
Equality holds in the first inequality when $m=1$ or $n=1$. The universal constant $e-1$ cannot be reduced: namely,
\begin{equation}\label{eq:sharp-constant}
 \sup_{m,n\ge1}\frac{P^+_{m,n}}{m}=e-1.
\end{equation}
\end{theorem}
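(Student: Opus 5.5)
Following the strategy announced in the abstract, I would represent the deck as a random interleaving of $m$ independent decks of distinct cards, prove an additivity statement for each of those decks, and then settle the single-deck case $m=1$ exactly.

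\textbf{Coupling.} Realise the uniformly shuffled deck as follows. Take $m$ independent uniform permutations $\sigma_1,\dots,\sigma_m$ of $\{1,\dots,n\}$. Independently, take a uniform interleaving word $w\in\{1,\dots,m\}^{mn}$ containing each letter exactly $n$ times. Position $t$ of the deck carries the next unused card of component deck $w_t$. Each arrangement of the multiset deck arises from the same number of pairs $(\sigma,w)$, namely $(n!)^m$ labelings of the copies. Hence this construction produces exactly the uniform multiset shuffle.

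\textbf{Reduction to one component.} Give the player extra information: before each guess, reveal $w_t$, i.e.\ which component deck the current card comes from. Extra information cannot reduce the optimal expectation, so $P^+_{m,n}$ is at most the optimum of this enriched game. Under the enriched information the components are independent. Feedback about a card of component $j$ is a function of $\sigma_j$ and of the guesses alone, so it carries no information about $\sigma_i$ for $i\ne j$.

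I would then prove additivity: the optimal value of the enriched game equals $m$ times the optimal value $P^+_{1,n}$ of a single deck of distinct cards. The argument is by induction, or via a dynamic-programming argument, over histories. Conditionally on the whole history, the posterior law of $(\sigma_1,\dots,\sigma_m)$ is a product measure. Each factor depends only on the guesses and feedback made at that component's own times. At a time $t$ with $w_t=j$, the guess influences only component $j$'s future. Therefore any strategy splits, for each fixed realisation of $w$, into $m$ single-deck strategies, possibly randomised by the other components' histories. Each of these earns at most $P^+_{1,n}$. This gives $P^+_{m,n}\le m P^+_{1,n}$.

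\textbf{Single-deck value.} It remains to show $P^+_{1,n}=\sum_{k=1}^n 1/k!$, the known optimum for distinct cards with yes/no feedback. For the upper bound I would argue as follows.
\begin{itemize}
\item Once a correct guess occurs at label $a$, the player learns the position of $a$.
\item Before the first correct guess, the natural optimal play is to keep guessing a single label $1$ until it hits. I would justify this via the Diaconis--Graham analysis, or by an induction on $n$ showing that the optimal value $V_n$ satisfies $V_n = 1 + V_{n-1}/1!\cdots$.
\end{itemize}
The cleanest route is to verify the recursion that gives $\sum_{k\le n}1/k!$: the value with $n$ remaining distinct cards in a fully known ``fresh'' state. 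I expect to cite or reprove the single-deck theorem from the section on the single deck. For the other direction, the strategy ``guess $1$ until correct, then guess $2$, \dots'' attains the sum: the expected number of correct guesses equals the probability that the permutation's labels $1,\dots,k$ appear in increasing order, summed over $k$, which is $\sum_k 1/k!$. This gives equality when $m=1$. When $n=1$ the score is deterministically $m$, which also gives equality.

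\textbf{Sharpness of $e-1$.} Take $m=1$ and let $n\to\infty$. Then $P^+_{1,n}=\sum_{k=1}^n1/k!\to e-1$, and the strict inequality $\sum_{k=1}^n 1/k!<e-1$ holds because the tail of the exponential series is positive. Together these give~\eqref{eq:sharp-constant}.

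\textbf{Expected obstacle.} The main obstacle is the additivity step. One must check rigorously that revealing the $w_t$ decouples the components, even though the player's guesses may depend on all the histories jointly. The danger is that coordinating across components could beat $m P^+_{1,n}$. I would handle this by conditioning on $w$ and on the other components' histories. This exhibits each component's guess sequence as a randomised single-deck strategy, and randomisation cannot beat the deterministic optimum.
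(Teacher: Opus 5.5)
Your proposal follows the paper's proof essentially step for step: the interleaving representation of the deck, revealing the component schedule, inducing a single-deck strategy on each component to get additivity, and citing Diaconis--Graham for the single-deck optimum, whose value you compute the same way as the paper (via the increasing relative order of $1,\dots,k$). Two small corrections are needed. First, each multiset word has $(m!)^n$ preimages, coming from the ways of assigning distinct deck labels to the $m$ copies of each label, not $(n!)^m$. Second, in the decoupling step you should fix $w$ together with the other decks' card orders $(\sigma_i)_{i\ne j}$, which are independent of $\sigma_j$ and let you regenerate the other components' feedback by simulation (the paper's strategy $B_w$); you should not condition on the other components' feedback histories, because these depend on $\sigma_j$ through the adaptive guesses and would leave deck $j$ non-uniform.
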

Theorem~\ref{thm:upper} requires no asymptotic assumptions and hence implies the second conjecture. Equation~\eqref{eq:upper} applies directly to finite decks, and~\eqref{eq:sharp-constant} describes its sharpness over all parameters. Sharpness of the universal constant should not be interpreted as equality for a fixed $m>1$: it already follows by taking $n\to\infty$ in the classical formula $P^+_{1,n}=\sum_{k=1}^{n}1/k!$ for a single deck of distinct cards~\cite[Theorem~5]{DG}.

\subsection{Relation to earlier extremal estimates}
For large multiplicities, much finer estimates for the maximum expectation are available than $(e-1)m$. Diaconis, Graham, He, and Spiro~\cite[Theorem~1.3]{DGHS} proved that, for $n$ sufficiently large in terms of $m$,
\[
 P^+_{m,n}\le m+O\bigl(m^{3/4}(\log m)^{1/4}\bigr).
\]
Nie~\cite[Theorem~1.3]{Nie} subsequently proved
\[
 P^+_{m,n}\le m+500\sqrt m\qquad(n\ge1200\sqrt m).
\]
Our upper bound therefore determines the universal linear constant over all parameters, particularly for fixed small multiplicities, rather than the finer order of the error for large multiplicities. On the other hand, Diaconis, Graham, and Spiro~\cite[Theorem~1]{DGS} constructed simple strategies showing that
\[
 P^-_{m,n}\le m-\frac1{40}\sqrt m,
 \qquad
 P^+_{m,n}\ge m+\frac1{40}\sqrt m
 \qquad(n\ge8m).
\]
These strategies show that the effect of feedback does not disappear altogether. Our lower bound shows that it disappears from the first-order asymptotic of the minimum expectation.

Another easily described strategy is to guess 1 until correct, then 2 until correct, and so on. If all $n$ labels have been guessed correctly in this order, continue guessing $n$ for the remaining rounds. Let $R$ be the length of the longest subsequence of the form $1,2,\ldots,k$ in the card sequence. Choosing the earliest available position at each step cannot obstruct a later choice, so the increasing part of this strategy produces $R$ points. Further correct guesses may occur after all $n$ labels have been completed; thus its total score is at least $R$. Clifton et al.~\cite[Theorem~1.1]{CDHSY} determined the limit of $\E R$ as $n\to\infty$ for fixed $m$. This gives a lower bound for $P^+_{m,n}$, but analyzing one strategy cannot bound all strategies from above.

\subsection{Ideas of the two proofs}
The upper bound follows from a product representation of a uniform permutation. Distribute the $m$ cards of each label among $m$ component decks, each containing $n$ distinct labels. Shuffle these decks independently and interleave them randomly. Erasing the component-deck labels gives exactly the original uniform card sequence. If the player is told in advance which deck occupies each position, the maximum expectation can only increase. Once the other decks and the interleaving order are fixed, any strategy that uses feedback across decks induces a legal single-deck strategy on a specified deck. The optimal expectation with the deck labels revealed is therefore exactly the sum of the $m$ single-deck optima. We construct the induced strategy round by round to justify this comparison for adaptive guesses.

The lower bound uses a different symmetry left by the feedback history. After the correctly guessed positions are deleted, each failed position forbids just one label, while the future positions remain interchangeable. Switching these two kinds of positions gives a lower bound for the next success probability without solving for the entire conditional distribution. We then normalize the score deviation for each label by its number of guesses, so that its increments match this probability comparison. The weights needed to recover the total score depend on the final feedback. We retain the cancellation provided by these weights before applying Cauchy--Schwarz to control their correlation with the terminal martingale values. The sum of the second moments over all coordinates is bounded in terms of the expected total number of successes, which is at most $m$ for a minimizing strategy. Thus we need neither characterize that strategy nor obtain a precise upper bound on success probabilities for all strategies.

Section~\ref{sec:model} defines the model and the strategy class. Sections~\ref{sec:maximum} and~\ref{sec:minimum} prove the upper bound for the maximum expectation and the lower bound for the minimum expectation, respectively. The final section discusses the limit for fixed multiplicity and the scale of the loss for large multiplicity. The appendices give exact counts for feedback histories, two solvable special cases, and comparisons with complete feedback on the same deck.

%% file: sections/model.tex
\section{Model and strategies}\label{sec:model}
We define the strategy class used in the introduction and show that both extrema are attained by deterministic strategies. It will therefore suffice to prove the main theorems for such strategies. This reduction imposes no restriction on the use of previous feedback.

Fix positive integers $m,n$. Let $[n]=\{1,\ldots,n\}$, and let $\Omega_{m,n}$ be the set of words of length $mn$ over $[n]$ in which each label occurs exactly $m$ times. The card sequence $X=(X_1,\ldots,X_{mn})$ is uniform on $\Omega_{m,n}$. In round $t$, the player first guesses $G_t\in[n]$ and then receives only the feedback $Y_t=\ind\{G_t=X_t\}$. A guess may depend on all previous guesses and feedback, and on a random seed independent of the card sequence. There are no restrictions on memory or computation. For such a strategy $A$, define
\[
 S=\sum_{t=1}^{mn}Y_t,\qquad
 P^-_{m,n}=\inf_A\E_A S,\qquad P^+_{m,n}=\sup_A\E_A S.
\]
A deterministic strategy is a family of functions $A_t:\{0,1\}^{t-1}\to[n]$, with $G_t=A_t(Y_1,\ldots,Y_{t-1})$. Earlier guesses are recursively determined by the same feedback history and need not be separate arguments. After specifying values on every history, including histories of probability zero, there are only finitely many deterministic strategies.

For a randomized strategy, draw a seed $U$ independent of the card sequence before play begins. Fixing $U=u$ gives a deterministic strategy $A^u$. Since the score lies between 0 and $mn$,
\[
 \mathbb E_A S=\int\mathbb E_{A^u}S\,d\mathbb P_U(u).
\]
Randomization therefore gives only convex combinations of deterministic expectations, and both extrema are attained by deterministic strategies. It also follows that bounds valid uniformly for all deterministic strategies apply to randomized strategies. Always guessing one label gives exactly $m$ successes, so $P^-_{m,n}\le m\le P^+_{m,n}$. All logarithms are natural.

To simplify repeated expressions in the upper-bound proof, write
\[
 v_n=P^+_{1,n}=\sum_{k=1}^{n}\frac1{k!}
\]
for the optimal expectation on a single deck of distinct cards. The finite-parameter bound in Theorem~\ref{thm:upper} then becomes $P^+_{m,n}\le mv_n$.

For $m\le81$, the lower bound in Theorem~\ref{thm:lower} is no stronger than nonnegativity of the score. Small multiplicities require separate analysis. Appendix~\ref{app:feedback} gives two boundary cases with exact solutions.

We prove the upper bound for the maximum expectation first, and then the lower bound for the minimum expectation.

%% file: sections/upper_single.tex
\section{Maximum expectation}\label{sec:maximum}
We prove Theorem~\ref{thm:upper}: the expected score of any strategy is at most the sum of $m$ single-deck optima. We first represent the repeated cards as a random interleaving of independent component decks. We then reveal the deck labels and show that the optimal expectation with this additional information is exactly $mv_n$.

\subsection{The single-deck optimum}\label{sec:single}
Diaconis and Graham~\cite[Theorem~5]{DG} determined an optimal strategy and its expectation for distinct cards. This exact value will serve as the comparison for repeated cards.
\begin{lemma}\label{up:lem:single}
For a uniform permutation of $n$ distinct cards, the optimal expectation with yes/no feedback is $P^+_{1,n}=v_n$. One optimal strategy guesses 1 until correct, then 2 until correct, and continues in this way.
\end{lemma}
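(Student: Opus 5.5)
The lower bound $P^+_{1,n}\ge v_n$ will come from computing the stated strategy exactly. The upper bound $\E_A S\le v_n$ for every deterministic strategy $A$ (enough by the reduction in Section~\ref{sec:model}) will come from the tail estimate $\Pp_A(S\ge k)\le 1/k!$ for each $k$. Summing $\E_A S=\sum_{k\ge1}\Pp_A(S\ge k)$ then gives the claim. If the upper bound resists, we can simply cite~\cite[Theorem~5]{DG}, as the paper does.

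\emph{Value of the increasing strategy.} Write $\pi(j)$ for the position of label $j$. The player guesses $1$ until the hit at time $\pi(1)$, then guesses $2$ from time $\pi(1)+1$ on. This produces a hit exactly when $\pi(2)>\pi(1)$; otherwise card $2$ is already gone and label $2$ is never hit again. By induction, the score is
\[
R=\max\{k:\pi(1)<\pi(2)<\cdots<\pi(k)\},
\]
so $\Pp(R\ge k)=1/k!$ and $\E R=v_n$.

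\emph{Upper bound via tails.} Fix $A$ and $k$. Since the cards are distinct, each label can be hit at most once, so the first $k$ successes occur at times $t_1<\dots<t_k$ with distinct labels $a_1,\dots,a_k$. On the event $S\ge k$, the feedback history up to $t_k$ determines all of these. The history consists of zeros except at $t_1,\dots,t_k$, and it fixes the guesses $g_t$ for $t\le t_k$.

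For a permutation $\sigma$ in this event, we permute the card values among the $k$ success positions by each $\tau\in S_k$, obtaining $\sigma^\tau$. We then follow $A$ on $\sigma^\tau$ from the start. The feedback agrees with that of $\sigma$ up to time $t_1$. At $t_1$, the player guesses $a_1$, and the hit occurs only if $\sigma^\tau(t_1)=a_1$. Inductively, $\sigma^\tau$ reproduces the same history only if $\tau$ is the identity.

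The plan is to show that among the $k!$ permutations $\sigma^\tau$, at most one lies in $\{S\ge k\}$. This would partition the symmetric group into blocks of size $k!$, each meeting the event at most once, and give $\Pp(S\ge k)\le1/k!$.

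\emph{Main obstacle.} The step above is where the argument is hardest. A different $\tau$ changes the history at the first deviating success time, after which $A$ may steer elsewhere and collect $k$ hits at other positions. The blocks must therefore be chosen more cleverly than a fixed set of positions.

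My first attempt is to define the block of $\sigma$ canonically through the strategy tree. Along the path of $\sigma$, record the labels $a_1,\dots,a_k$. Let the block consist of the permutations obtained by permuting the \emph{positions} of these labels among $\{t_1,\dots,t_k\}$, with all other cards held fixed. The statement to prove is then: if $\sigma^\tau$ also scores $k$, then its own recorded label sequence is $a_{\tau(1)},\dots$ at increasing times. This forces a consistency condition showing that $\tau$ must preserve the order in which $A$ first attempts these labels.

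If this injection fails, the fallback is a direct induction on $n$. The argument would use the generalized state consisting of the remaining positions together with the set of labels still possibly present. Conditioning on the first guess, we would show that the optimal continuation value is at most $v$ of the number of live labels. The two lemmas to establish are monotonicity of $v$ in the number of labels and the inequality $v_n\ge \tfrac1n(1+v_{n-1})+\tfrac{n-1}{n}v_n'$, where $v_n'$ is the value after one miss.
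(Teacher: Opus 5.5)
Your computation of the value of the increasing strategy is the paper's. The paper obtains optimality only by citing \cite[Theorem~5]{DG}, so your proposal is complete only through that fallback. The self-contained upper bound has a genuine gap at the block step, and your ``canonical'' refinement does not repair it. Permuting the positions of $a_1,\dots,a_k$ among $t_1,\dots,t_k$ with all other cards fixed produces the same set $\{\sigma^\tau\}$, and that set can contain several elements of $\{S\ge k\}$.

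Here is a counterexample with $n=4$ and $k=2$. The strategy guesses $1$ in round~1. After a success there it guesses $3$ in round~2. After a failure it guesses $2$ in rounds~2 and~3, and after a success in round~3 it guesses $4$ in round~4. For $\sigma=(1,3,2,4)$ the successes occur in rounds $1,2$ with labels $1,3$. The swapped word $(3,1,2,4)$ fails in rounds $1,2$ and succeeds in rounds $3,4$ with labels $2,4$. So one block meets $\{S\ge2\}$ twice, and the recorded labels of $\sigma^\tau$ are not $a_{\tau(1)},a_{\tau(2)}$. Moreover, the block $\{(3,1,2,4),(3,1,4,2)\}$ of the second word overlaps that of $\sigma$ without equalling it, so the blocks do not partition the symmetric group.

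The fallback induction is not well posed either. After several failures the conditional law depends on how many past positions forbid each label, not only on which labels may remain (compare Appendix~\ref{app:counts}). Also, the inequality involving $v_n'$ needs the exact value after a failure, which is the original problem in another form.

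The target bound $\Pp_A(S\ge k)\le1/k!$ is nevertheless true. The missing idea is to strengthen it to every history and induct on the number $L$ of remaining positions, instead of seeking a global injection. For a positive-probability history, let $F$ be the set of labels at the remaining positions. Let $V_k$ be the largest conditional probability of at least $k$ further successes, so $V_0=1$. The claim is
\[
 V_k\le\frac1{k!}\max_{|C|=k}\Pp(C\subseteq F\mid\text{history}).
\]
Given the history, the remaining positions are exchangeable; all probabilities below are conditional on it. Suppose the next position $t$ is guessed as $c$, and let $F'$ be the set of labels after $t$.

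\begin{itemize}
\item The induction hypothesis bounds the success branch by $\frac1{(k-1)!}\Pp(X_t=c,\ D\subseteq F')=\frac kL\cdot\frac1{k!}\Pp(D\cup\{c\}\subseteq F)$ for some $(k-1)$-set $D\not\ni c$.
\item It bounds the failure branch by $\frac1{k!}\Pp(C\subseteq F')=\frac{L-k}L\cdot\frac1{k!}\Pp(C\subseteq F)$ for some $k$-set $C$.
\item If $L<k$, every term vanishes.
\end{itemize}

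The weights $k/L$ and $(L-k)/L$ sum to one, so the claim propagates. At the empty history every $k$-set lies in $F$, which gives $\Pp_A(S\ge k)\le1/k!$. Summing over $k$ gives $\E_AS\le v_n$, and the increasing strategy attains all these tail bounds simultaneously.
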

\begin{proof}
Optimality follows from~\cite[Theorem~5]{DG}, whose assumptions are precisely distinct cards, a uniform permutation, and only yes/no feedback after each round. We compute the value here to explain the factorial sum. Let $R$ be the score of the stated strategy. For $1\le k\le n$, the event $\{R\ge k\}$ occurs exactly when the labels $1,\ldots,k$ appear in increasing relative order in the deck. Their $k!$ relative orders are equally likely in a uniform permutation, so
\[
 \Pp(R\ge k)=\frac1{k!},\qquad
 \E R=\sum_{k=1}^n\Pp(R\ge k)=v_n.
\]
The last equality uses the pathwise identity $R=\sum_{k=1}^n\ind\{R\ge k\}$. The randomization reduction in Section~\ref{sec:model} gives the same optimum for randomized strategies.
\end{proof}

\subsection{A product representation with hidden component decks}
To apply the single-deck result to repeated cards, we must preserve both uniformity of the card sequence and independence of the component decks. Once cards of the same label are distinguished by deck labels, a uniform permutation of the distinguishable cards decomposes into independent deck permutations and an interleaving schedule.

Write each card's identity as $(r,i)\in[m]\times[n]$, where $r$ is its deck label. Let $Z$ be a uniform random permutation of these $mn$ distinct cards. Erase $r$, retaining only $i$, and write $X(Z)$ for the resulting word. Let $\mathfrak S_n$ denote the permutations of $[n]$, and let $\mathcal L_{m,n}$ be the set of sequences of length $mn$ in which each deck label $r\in[m]$ appears exactly $n$ times.
\begin{lemma}\label{up:lem:product}
The word $X(Z)$ is uniform on $\Omega_{m,n}$. Let $L_t$ be the deck label at position $t$ of $Z$, and let $\Pi_r\in\mathfrak S_n$ record the labels from deck $r$ in their order of appearance. Then
\[
 L,\Pi_1,\ldots,\Pi_m
\]
are mutually independent. Each $\Pi_r$ is uniform on $\mathfrak S_n$, and $L$ is uniform on the set $\mathcal L_{m,n}$ of schedules in which every deck label appears exactly $n$ times.
\end{lemma}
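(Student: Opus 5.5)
The plan is to prove both claims with a single bijection and count fibers. Define
\[
\Phi: Z \longmapsto (L,\Pi_1,\ldots,\Pi_m),
\]
where $Z$ ranges over the $(mn)!$ orderings of the distinct cards $(r,i)$, and the target set is $\mathcal L_{m,n}\times\mathfrak S_n^m$. I will show $\Phi$ is a bijection and then read off the joint law of its coordinates.

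First, $\Phi$ is well defined. Each deck $r$ contributes exactly $n$ cards, so $L\in\mathcal L_{m,n}$. The labels of deck $r$, read in order of appearance, are the distinct elements of $[n]$, so $\Pi_r\in\mathfrak S_n$. For the inverse, given $(\ell,\pi_1,\ldots,\pi_m)$, fill the $j$-th position $t$ with $\ell_t=r$ by the card $(r,\pi_r(j))$. This recovers $Z$ uniquely, and it inverts $\Phi$ in both directions. As a cardinality check,
\[
|\mathcal L_{m,n}|\,(n!)^m=\frac{(mn)!}{(n!)^m}(n!)^m=(mn)!.
\]

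Since $Z$ is uniform and $\Phi$ is a bijection, $(L,\Pi_1,\ldots,\Pi_m)$ is uniform on the product set $\mathcal L_{m,n}\times\mathfrak S_n^m$. The uniform measure on a product set is the product of the uniform measures on the factors. This gives mutual independence together with the stated marginals: $L$ uniform on $\mathcal L_{m,n}$ and each $\Pi_r$ uniform on $\mathfrak S_n$.

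For the word $X(Z)$, I will show the fibers of $Z\mapsto X(Z)$ all have size $(m!)^n$. Given $w\in\Omega_{m,n}$, the $m$ positions carrying label $i$ must receive the cards $(1,i),\ldots,(m,i)$ in some order. These assignments can be chosen independently for each $i$, giving $(m!)^n$ preimages. Every fiber has the same size, so $X(Z)$ is uniform, consistent with $|\Omega_{m,n}|=(mn)!/(m!)^n$.

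The only point needing care is checking that the inverse map is correct, and this is routine. I do not expect a real obstacle.
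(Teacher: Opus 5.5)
Your proposal is correct and follows essentially the same route as the paper: the bijection $Z\mapsto(L,\Pi_1,\ldots,\Pi_m)$ onto $\mathcal L_{m,n}\times(\mathfrak S_n)^m$, with the cardinality check $|\mathcal L_{m,n}|(n!)^m=(mn)!$, and the fiber count $(m!)^n$ for uniformity of $X(Z)$. The only difference is presentation: you write the inverse map explicitly, whereas the paper writes out the product formula for the joint probability.
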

\begin{proof}
Fix $x\in\Omega_{m,n}$. For each label $i$, there are $m!$ ways to assign distinct deck labels to its $m$ occurrences. Exactly $(m!)^n$ distinguishable-card permutations therefore map to $x$. This number is independent of $x$, proving the first assertion.

The map
\[
 Z\longmapsto (L,\Pi_1,\ldots,\Pi_m)
\]
is a bijection from distinguishable-card permutations to $\mathcal L_{m,n}\times(\mathfrak S_n)^m$. Given the right-hand side, take the next label from the specified deck permutation at each step of $L$ to reconstruct $Z$ uniquely. Thus the uniform measure becomes the uniform measure on this finite Cartesian product, which is the product of the uniform measures on its factors. Explicitly,
\[
 |\mathcal L_{m,n}|\,(n!)^m
 =\frac{(mn)!}{(n!)^m}(n!)^m=(mn)!.
\]
For every $\ell\in\mathcal L_{m,n}$ and $\pi_1,\ldots,\pi_m\in\mathfrak S_n$,
\[
 \mathbb P(L=\ell,\Pi_1=\pi_1,\ldots,\Pi_m=\pi_m)
 =\frac1{(mn)!}=\frac1{|\mathcal L_{m,n}|}\prod_{r=1}^m\frac1{n!},
\]
which gives the claimed independence and uniform marginals.
\end{proof}

%% file: sections/upper_bound.tex
\subsection{The optimum with the schedule revealed}\label{sec:enhanced}
When deck labels are revealed, applying an optimal single-deck strategy separately on each deck gives expected score $mv_n$. We show that no strategy can do better, even if its guesses depend jointly on the past feedback from every deck. Once the other decks and the interleaving order are fixed, its guesses on a specified deck still form a single-deck strategy with only yes/no feedback.

Consider an enhanced experiment: sample as in Lemma~\ref{up:lem:product}, and reveal the entire schedule $L$ before the first guess. Card labels remain hidden, and each round still gives only yes/no feedback. Denote the optimal expectation by $\widetilde P^+_{m,n}$.
\begin{lemma}\label{up:lem:additive}
The enhanced experiment satisfies
\[
 \widetilde P^+_{m,n}=m v_n.
\]
\end{lemma}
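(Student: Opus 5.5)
We prove the two inequalities $\widetilde P^+_{m,n}\ge mv_n$ and $\widetilde P^+_{m,n}\le mv_n$ separately.

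\emph{Lower bound.} Once $L$ is known, the player runs, independently on each deck $r$, the optimal single-deck strategy of Lemma~\ref{up:lem:single}. At a round $t$ with $L_t=r$, the guess is the one that strategy would make given only the feedback received so far at rounds belonging to deck $r$. The rounds of deck $r$ see exactly the permutation $\Pi_r$, which is uniform by Lemma~\ref{up:lem:product}. So the expected score collected on deck $r$ is $v_n$, and summing over $r$ gives $mv_n$.

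\emph{Upper bound.} By the randomization reduction of Section~\ref{sec:model}, which applies equally to the enhanced experiment, it suffices to bound deterministic strategies. Such a strategy is a family of maps $G_t=A_t(L;Y_1,\ldots,Y_{t-1})$. The score decomposes as $S=\sum_{r=1}^m S_r$, where $S_r$ counts the successes at rounds $t$ with $L_t=r$. We show $\E S_r\le v_n$ for each $r$ by conditioning on
\[
 L=\ell,\qquad \Pi_s=\pi_s\quad (s\ne r).
\]
By Lemma~\ref{up:lem:product}, conditionally on these values $\Pi_r$ is still uniform on $\mathfrak S_n$. It therefore suffices to show that, with $\ell$ and $(\pi_s)_{s\ne r}$ frozen, the rounds assigned to deck $r$ are played by a legitimate single-deck yes/no strategy $B$ against $\Pi_r$. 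Then $\E[S_r\mid \ell,(\pi_s)_{s\ne r}]\le P^+_{1,n}=v_n$ by Lemma~\ref{up:lem:single}, and averaging and summing over $r$ gives $\E S\le mv_n$.

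\emph{Key step: constructing $B$.} Let $t_1<\cdots<t_n$ be the positions with $\ell_t=r$. The strategy $B$ keeps a simulated history and proceeds round by round. Between $t_{k-1}$ and $t_k$, every round $t$ belongs to some other deck $s=\ell_t$. The card there is determined by $\pi_s$ and the number of earlier occurrences of $s$ in $\ell$. The guess $A_t$ is determined by the simulated history so far. Hence $Y_t$ can be computed without any access to $\Pi_r$. At round $t_k$, $B$ guesses $A_{t_k}$ of the current simulated history, receives the true yes/no feedback on the $k$-th card of $\Pi_r$, and appends it to the history.

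By induction on $t$, the simulated history coincides pathwise with the real one. Thus $B$ makes exactly the real guesses at rounds $t_1,\ldots,t_n$, and its score equals $S_r$ pathwise. Moreover, $B$'s $k$-th guess depends only on its first $k-1$ feedbacks and on the frozen data, so it is a deterministic single-deck strategy.

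The main obstacle is this adaptivity point: guesses on deck $r$ may depend on feedback from other decks, which in turn depends on earlier guesses influenced by deck $r$. The round-by-round induction above resolves this circularity, because the other decks' cards are fixed constants once we condition. This is exactly where the independence in Lemma~\ref{up:lem:product} is essential.
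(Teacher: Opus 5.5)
Your proposal is correct and follows the paper's proof essentially step for step. For the lower bound, you run the optimal single-deck strategy separately on each deck using the revealed schedule. For the upper bound, you freeze $w=(\ell,(\pi_s)_{s\ne r})$ and simulate the other decks' feedback round by round, turning the joint strategy into a legal single-deck strategy $B_w$ whose score on $\pi_r$ equals $S_r$ pathwise; the conditional uniformity of $\Pi_r$ from Lemma~\ref{up:lem:product} then gives $\E[S_r\mid W_r]\le v_n$, exactly as in the paper.
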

\begin{proof}
We first prove the upper bound. Since the schedule takes values in a finite set, we may again fix a deterministic strategy $A$ and subsequently mix over randomized strategies. Let $S_r$ be the number of successes on deck $r$, so $S=\sum_rS_r$.

Fix $r\in[m]$. We fix the other decks to construct a single-deck strategy; their card labels are not additionally revealed to the player in the original experiment. For any fixed external data
\[
 w=\bigl(\ell,(\pi_s)_{s\ne r}\bigr),
\]
construct a strategy $B_w$ for $n$ distinct cards. Determine the feedback seen by $A$ recursively for $t=1,\ldots,mn$:
\begin{enumerate}
\item If $\ell_t=s\ne r$, the permutation $\pi_s$ is fixed, so the card at this round is known. Compare it with the guess prescribed by $A$ from the previous feedback to determine the current feedback.
\item If $\ell_t=r$, use the label prescribed by $A$ as the next guess in the single-deck game, and use its actual yes/no outcome as the feedback at round $t$.
\end{enumerate}
Before the $k$th card of deck $r$ is dealt, this recursion uses only the fixed data $w$ and the preceding $k-1$ feedback bits from that deck. The $k$th guess is therefore a function of those bits, so $B_w$ is a strategy allowed by Lemma~\ref{up:lem:single}. Guesses on the other decks can also depend on feedback from deck $r$, but the recursion proceeds in time order and uses no feedback that has not yet been obtained.

Now fix the permutation $\pi_r$. If the feedback through round $t-1$ agrees with the original game, then $A$ prescribes the same guess at round $t$. When $\ell_t\ne r$, the recursion uses the fixed card from the original game; when $\ell_t=r$, it uses the same card of $\pi_r$. The feedback at round $t$ therefore agrees as well. By induction, the score of $B_w$ on $\pi_r$ is exactly $S_r$, the original game's score on deck $r$.

By Lemma~\ref{up:lem:product}, the permutation of deck $r$ is independent of the external data $W_r=(L,(\Pi_s)_{s\ne r})$. Conditional on those data, deck $r$ remains uniform, and the single-deck optimum gives
\[
 \E[S_r\mid W_r]\le v_n
\]
for every possible $W_r$. Taking expectations and summing over $r$ gives $\mathbb E_A S\le mv_n$. Maximizing over all strategies yields $\widetilde P^+_{m,n}\le mv_n$.

For the reverse inequality, apply the optimal strategy of Lemma~\ref{up:lem:single} separately to each deck. The revealed schedule tells the player which deck is active, and the strategy state can be maintained for each deck separately. Each deck contributes expectation $v_n$, giving total expectation $mv_n$.
\end{proof}
\begin{proof}[Proof of Theorem~\ref{thm:upper}]
Any strategy for the original experiment can ignore $L$ in the enhanced experiment. By Lemma~\ref{up:lem:product}, erasing the deck identities gives exactly the original distribution, so the strategy has the same expected score in both experiments. Hence
\[
 P^+_{m,n}\le\widetilde P^+_{m,n}=mv_n.
\]
Since $v_n$ is a strict partial sum of $\sum_{k\ge1}1/k!=e-1$, this proves~\eqref{eq:upper}.

Equality for $m=1$ follows from Lemma~\ref{up:lem:single}. For $n=1$, every card has the same label, and $P^+_{m,1}=m=mv_1$. Finally, if $P^+_{m,n}\le Cm$ for all parameters, setting $m=1$ and letting $n\to\infty$ gives $C\ge e-1$.
\end{proof}
Theorem~\ref{thm:upper} directly implies Conjecture~2 of~\cite{DGS}, with a bound valid for every finite $n$.

%% file: sections/lower_conditional.tex
\section{Minimum expectation}\label{sec:minimum}
We prove Theorem~\ref{thm:lower}: when $n\ge m$, no strategy can lower the expected score below $m-3m^{3/4}$. We first use the restrictions imposed by feedback to bound the conditional success probability from below. Martingale second moments then control the deviations caused by adaptive guesses, and these estimates combine to bound the expected loss.

\subsection{Feedback histories and conditional distributions}\label{sec:conditional}
The difficulty is that a player may repeatedly choose the same label in response to earlier feedback. The next success probability therefore cannot be replaced by the unconditional probability $1/n$. We first compare the marginals conditional on a history, and then estimate how much deviation adaptive choices can accumulate.

By the reduction in Section~\ref{sec:model}, fix a deterministic strategy. Let $N=mn$, $S_t=\sum_{u=1}^tY_u$, $S_0=0$, and $\mathcal F_t=\sigma(Y_1,\ldots,Y_t)$. Since guesses are determined by past feedback, $G_t$ is $\mathcal F_{t-1}$-measurable. For $i\in[n]$, define
\[
 a_i(t)=\sum_{u=1}^t\ind\{G_u=i\},\qquad
 s_i(t)=\sum_{u=1}^t\ind\{G_u=i\}Y_u.
\]
These count the guesses of label $i$ and the successful ones, respectively, and satisfy
\[
 \sum_i a_i(t)=t,\qquad \sum_i s_i(t)=S_t,\qquad
 0\le s_i(t)\le m.
\]

\subsection{A comparison for restricted permutations}
A failure excludes only one label at its position, while positions not yet guessed are not directly restricted by the history. If all positions allowing $i$ were symmetric, their probability of containing $i$ would equal the number of remaining copies of $i$ divided by the number of allowed positions. We show that restrictions on other labels cannot make the probability at a free position much smaller than this ratio.

Consider words of length $M$ over $[n]$ in which label $i$ occurs exactly $r_i$ times, where $0\le r_i\le m$ and $\sum_i r_i=M$. Specify some positions, each forbidding one label. The other $L\ge1$ positions are unrestricted and are called free. Let $\mathcal W$ be the set of words satisfying these restrictions, assume $\mathcal W\ne\varnothing$, and give it the uniform measure. Let $f_i$ count the positions forbidding $i$, and let $p_i$ be the probability that a specified free position contains $i$. Permuting the free positions preserves $\mathcal W$, so $p_i$ does not depend on the chosen position.
\begin{lemma}\label{low:lem:switch}
If $L\ge m$, then for every $i\in[n]$,
\[
 p_i\ge\left(1-\frac mL\right)\frac{r_i}{M-f_i}.
\]
\end{lemma}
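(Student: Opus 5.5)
The plan is to write $r_i$ as a sum of occupation probabilities over the positions allowed for $i$, and to compare each restricted position with a free one by a switching argument. Label $i$ must occupy exactly $r_i$ positions, and none of them forbids $i$. Taking expectations under the uniform measure on $\mathcal W$ gives
\[
 r_i=Lp_i+\sum_{k}q_k,
\]
where $k$ ranges over the $M-f_i-L$ restricted positions whose forbidden label is some $j\ne i$, and $q_k$ is the probability that position $k$ contains $i$. It then suffices to show
\[
 q_k\le \frac{L}{L-m}\,p_i \qquad\text{whenever } L>m .
\]
The case $L=m$ is trivial, since then the right-hand side of the lemma vanishes. Granting the displayed bound,
\[
 r_i\le p_i\Bigl(L+(M-f_i-L)\tfrac{L}{L-m}\Bigr)\le p_i\,(M-f_i)\frac{L}{L-m},
\]
which rearranges to $p_i\ge(1-m/L)\,r_i/(M-f_i)$, the claimed inequality.

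For the comparison, fix a restricted position $k$ forbidding $j\ne i$. Let $A=\{w\in\mathcal W:w_k=i\}$, and for each free position $u$ let $B_u=\{w\in\mathcal W:w_u=i\}$. By the symmetry of free positions noted before the lemma, $|B_u|=p_i|\mathcal W|$ for every free $u$. Consider pairs $(w,u)$ with $w\in A$, $u$ free, and $w_u\ne j$.

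Map such a pair to the word $w'$ obtained by swapping the entries at $k$ and $u$. The word $w'$ lies in $\mathcal W$ for three reasons:
\begin{itemize}
\item position $k$ now holds $w_u\ne j$, which is allowed;
\item the free position $u$ has no restriction, and it now holds $i$;
\item all other positions and all label counts are unchanged.
\end{itemize}
Hence $w'\in B_u$. If $w_u=i$, the swap is the identity, and this is harmless. For fixed $u$, the map $w\mapsto w'$ is injective, because swapping back recovers $w$.

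Now count the pairs. Each $w\in A$ has at most $r_j\le m$ free positions containing $j$, so it belongs to at least $L-m$ pairs. Injectivity for each fixed $u$ gives
\[
 |A|(L-m)\le\sum_u|B_u|=L\,p_i|\mathcal W|.
\]
Since $q_k=|A|/|\mathcal W|$, this gives $q_k\le Lp_i/(L-m)$, as required.

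The main point needing care is the switching step. Two things must be checked: the swapped word satisfies every restriction, which holds precisely because we excluded free positions holding $j$; and the counting loses only $r_j\le m$ positions rather than $r_i+r_j$. Allowing $w_u=i$ as a fixed point of the swap is what avoids the weaker bound $L-2m$.
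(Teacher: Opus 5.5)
Your proof is correct and is essentially the paper's switching argument. The paper fixes a free position $z$, uses the swap bijection between $\{w_x=i,\ w_z\ne j\}$ and $\{w_z=i\}$, and bounds $\Pp(w_z=j\mid w_x=i)\le m/L$ by the symmetry of free positions conditional on $w_x=i$, then sums over all positions allowing $i$; your double count over pairs $(w,u)$ is the same comparison, with that conditional-symmetry step made explicit by averaging over the free position.
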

\begin{proof}
Every free position allows $i$, so $M-f_i\ge L>0$. If $L=m$, the right-hand side is zero and the result follows. Assume henceforth that $L>m$, and fix a free position $z$.

Let $x$ be a position forbidding $j$, where $j\ne i$. In $\mathcal W$, consider
\[
 A_x=\{w_x=i,\ w_z\ne j\},\qquad B_z=\{w_z=i\}.
\]
Swapping positions $x,z$ is a bijection from $A_x$ to $B_z$. Indeed, for $w\in A_x$, the new label at $x$ is not $j$, and the new label at $z$ is $i$. All other positions and all multiplicities are unchanged, so the resulting word belongs to $\mathcal W$ and to $B_z$. Conversely, for $w\in B_z$, its original label $w_x$ is not $j$. After the swap, $x$ contains the allowed label $i$ and $z$ contains the original $w_x\ne j$, so the word belongs to $A_x$. Applying the swap twice restores the word. Uniformity gives
\[
 p_i=\mathbb P(A_x).
\]
If $\mathbb P(w_x=i)=0$, the inequality $p_i\ge(1-m/L)\mathbb P(w_x=i)$ is immediate. Otherwise, conditional on $w_x=i$, the free positions are still symmetric, and at most $r_j$ of them contain $j$. Hence
\[
 L\mathbb P(w_z=j\mid w_x=i)
 =\mathbb E\left[\sum_{y\text{ free}}\ind\{w_y=j\}\,\middle|\,w_x=i\right]
 \le r_j\le m.
\]
Thus
\[
 p_i=\mathbb P(w_x=i)
       \bigl(1-\mathbb P(w_z=j\mid w_x=i)\bigr)
 \ge(1-m/L)\mathbb P(w_x=i).
\]
If $x$ is itself free, then $\mathbb P(w_x=i)=p_i$ and the same comparison holds. If $x$ forbids $i$, that probability is zero. Let $I_i$ be the set of positions allowing $i$. Summing the comparison over $x\in I_i$ gives
\[
 (M-f_i)p_i
 \ge(1-m/L)\sum_{x\in I_i}\mathbb P(w_x=i)
 =(1-m/L)r_i.
\]
The last equality holds because each word contains exactly $r_i$ copies of $i$, all in $I_i$. Division by $M-f_i>0$ proves the claim.
\end{proof}

\subsection{A probability bound conditional on feedback}
The switching lemma applies to every feedback history that can occur. After successful positions are deleted, only the past failed positions and the future free positions remain. The resulting lower bound involves only the numbers of guesses and successes for the current label.

Fix $t\ge1$ and a positive-probability feedback history $(y_1,\ldots,y_{t-1})$. For a deterministic strategy, this history recursively specifies every earlier guess. The compatible card sequences are exactly those satisfying $X_u=G_u$ when $y_u=1$ and $X_u\ne G_u$ when $y_u=0$. Sufficiency follows by induction on $u=1,\ldots,t-1$: if the first $u-1$ feedback bits agree, the strategy makes the same guess in round $u$, and the position restriction makes the next feedback bit agree as well. Thus the history event imposes no further unstated restrictions. Since the original sequence is uniform, its conditional law is uniform on the compatible sequences.

Deleting the successful positions is a bijection; its inverse reinserts the known labels at the known positions. Abbreviate $a_i=a_i(t-1)$ and $s_i=s_i(t-1)$ in this paragraph. After deletion, the parameters are
\[
 M=N-S_{t-1},\quad r_i=m-s_i,\quad f_i=a_i-s_i,
 \quad L=N-t+1.
\]
Each past failed position forbids one label, and the $L$ future positions are free. When $L\ge m$, Lemma~\ref{low:lem:switch} therefore applies to every positive-probability history.

Near the end of the deck, fewer free positions remain and the comparison weakens. To retain a uniform comparison factor, discard the last $B$ rounds and study the first $T=N-B$ rounds. Take an integer $m\le B<N$ and set $\eta=m/B$. For $1\le t\le T$, define
\[
 p_t=\mathbb E(Y_t\mid\mathcal F_{t-1}),\qquad
 q_t=\frac{m-s_{G_t}(t-1)}{N-a_{G_t}(t-1)}.
\]
Here $p_t$ is the actual conditional success probability, whereas $q_t$ is only a comparison ratio. Its numerator $m-s_{G_t}(t-1)$ includes cards at past failed positions and is not the number actually remaining in the deck. We have $N-a_{G_t}(t-1)\ge N-(t-1)\ge B+1>0$ and $0\le q_t\le m/(B+1)\le1$. For a given history, set $i=G_t$. Since $L=N-t+1\ge B+1>m$, the switching lemma applies. Moreover,
\[
 0<M-f_i=N-a_i-S_{t-1}+s_i\le N-a_i,
\]
so
\begin{equation}\label{low:eq:lower}
 p_t\ge\left(1-\frac mL\right)\frac{m-s_i}{M-f_i}
 \ge(1-\eta)q_t.
\end{equation}
The second inequality uses $m-s_i\ge0$ and $1-\eta\ge0$, so comparing denominators does not reverse the inequality.

We must now accumulate these one-round comparisons into a bound for the total score. Summing $q_t$ directly is inconvenient because both its numerator and its denominator depend on the strategy. The following normalization makes these changes cancel.

%% file: sections/lower_martingale.tex
\subsection{Martingale control of adaptive guesses}\label{sec:martingale}
We seek to control the deviation of the total score from $t/n$. For a single label, the immediate choice is $s_i(t)-a_i(t)/n$, but its mean increment involves $p_t-1/n$, whereas~\eqref{low:eq:lower} compares $p_t$ with $q_t$. The normalization replaces $1/n$ in the increment by $q_t$.

Specifically, if there have been $a$ guesses of $i$ and $s$ successes before guessing $i$ again, then
\[
 \frac{m-s}{N-a}-\frac mN
 =-\frac{s-(m/N)a}{N-a}.
\]
The departure of the comparison ratio from $m/N$ is thus determined by the score deviation already accumulated. Dividing that deviation by $1-a/N$ cancels this term as the guess count changes. For $i\in[n]$ and $0\le t\le T$, set
\[
 D_i(t)=\frac{s_i(t)-(m/N)a_i(t)}{1-a_i(t)/N},
\]
and, for $1\le t\le T$, write
\[
 c_t=\frac{N}{N-a_{G_t}(t)}.
\]
Since $a_i(t)\le t\le N-B$, the two denominators are at least $B/N$ and $B$, respectively, and hence positive. Although $c_t$ is written using the guess count at time $t$, it is determined before the feedback arrives: $a_{G_t}(t)=a_{G_t}(t-1)+1$ is $\mathcal F_{t-1}$-measurable. Also $c_t\le N/B$.

When $G_t=i$, abbreviate $a=a_i(t-1)$, $s=s_i(t-1)$, and $y=Y_t$. Putting the fractions over a common denominator gives
\begin{align*}
 D_i(t)-D_i(t-1)
 &=\frac{N(s+y)-m(a+1)}{N-a-1}-\frac{Ns-ma}{N-a}\\
 &=\frac{N\{(N-a)y-(m-s)\}}{(N-a-1)(N-a)}
 =c_t(y-q_t).
\end{align*}
When $G_t\ne i$, this coordinate does not change. It remains to estimate separately the mean-zero increment $Y_t-p_t$ and the comparison error $p_t-q_t$. Define
\begin{align*}
 M_i(t)&=\sum_{u=1}^t\ind\{G_u=i\}c_u(Y_u-p_u),\\
 A_i(t)&=\sum_{u=1}^t\ind\{G_u=i\}c_u(p_u-q_u),
\end{align*}
with $M_i(0)=A_i(0)=0$. Summing the increment identity gives $D_i(t)=M_i(t)+A_i(t)$. All summands are bounded and hence square-integrable. Since
\[
 \mathbb E\bigl[\ind\{G_t=i\}c_t(Y_t-p_t)\mid\mathcal F_{t-1}\bigr]=0,
\]
each $M_i$ is a square-integrable martingale starting at zero.

To recover the total score, we multiply back by the normalization denominators. Set
\[
 b_i=\frac{a_i(T)}N,\qquad w_i=1-b_i.
\]
The identities $\sum_i a_i(T)=T$ and $\sum_i s_i(T)=S_T$ give, pathwise,
\begin{equation}\label{low:eq:reconstruction}
 S_T-\frac{mT}{N}
 =\sum_i w_iD_i(T)
 =\sum_i w_iM_i(T)+\sum_i w_iA_i(T).
\end{equation}
A lower bound for the score thus reduces to lower bounds for the two weighted sums. The weights $w_i$ depend on the final feedback, so the first sum need not have expectation zero merely because the martingales do. We retain this dependence and treat the drift and martingale terms separately.

\subsubsection{A lower bound for the drift}
Each negative drift increment is controlled by $\eta q_t$. Multiplication by the weights that recover the score makes adjacent denominators telescope, leaving an accumulated error that depends only on the total number of guesses.

By~\eqref{low:eq:lower}, $p_t-q_t\ge-\eta q_t$. The terminal weights depend on the feedback history, but pathwise $0\le b_i,w_i\le1$ and $\sum_i b_i=T/N$. At successive guesses of $i$, its previous guess count takes the values $0,1,\ldots,a_i(T)-1$. Retaining $w_i=(N-a_i(T))/N$ and using $m-s_i(t-1)\le m$, we obtain
\begin{align*}
 w_i\sum_{t\le T:G_t=i}c_tq_t
 &\le w_i mN\sum_{a=0}^{a_i(T)-1}
 \frac1{(N-a-1)(N-a)}\\
 &=w_i mN\left(\frac1{N-a_i(T)}-\frac1N\right)
 =\frac{m a_i(T)}N.
\end{align*}
The sum telescopes by taking differences of adjacent reciprocals; both sides are zero if $a_i(T)=0$. Consequently,
\begin{equation}\label{low:eq:drift}
 \sum_iw_iA_i(T)
 \ge-\eta\sum_iw_i\sum_{t\le T:G_t=i}c_tq_t
 \ge-\frac{m^2T}{NB}.
\end{equation}
This estimate holds for every feedback history. The cancellation between the normalization denominator and the terminal weight avoids any need to estimate the number of guesses of each label separately.

\subsubsection{Second moments with random weights}
Summing the second moments over all coordinates leaves one weighted conditional variance per round. After applying $c_t\le N/B$, the expected sum of variances is at most $\E S_T$. This is why we can ultimately use $\E S_T\le m$ only for a minimizing strategy. A pathwise norm bound on the weights then handles their correlation with the terminal martingale values.

Let $\Delta M_i(t)=M_i(t)-M_i(t-1)$. If $u<t$, then $\Delta M_i(u)$ is $\mathcal F_{t-1}$-measurable, and hence
\[
 \mathbb E[\Delta M_i(u)\Delta M_i(t)]
 =\mathbb E\bigl[\Delta M_i(u)\mathbb E(\Delta M_i(t)\mid\mathcal F_{t-1})\bigr]=0.
\]
Expanding the square, eliminating the cross terms, and using the conditional Bernoulli law of $Y_t$, we obtain
\begin{align*}
 \sum_i\mathbb E M_i(T)^2
 &=\sum_{i,t}\mathbb E\bigl[\ind\{G_t=i\}c_t^2(Y_t-p_t)^2\bigr]\\
 &=\mathbb E\sum_{t=1}^Tc_t^2p_t(1-p_t)
 \le\left(\frac NB\right)^2\mathbb E\sum_{t=1}^Tp_t
 =\left(\frac NB\right)^2\mathbb E S_T.
\end{align*}
For each $t$, precisely one index satisfies $G_t=i$. No independence between coordinates is needed.

Write $\|\cdot\|_2$ for the Euclidean norm on $\mathbb R^n$, and set $b=(b_1,\ldots,b_n)$ and $M(T)=(M_1(T),\ldots,M_n(T))$. Since $b_i\ge0$,
\[
 \|b\|_2^2=\sum_i b_i^2\le\left(\sum_i b_i\right)^2=(T/N)^2.
\]
Apply Cauchy--Schwarz pathwise, and then apply it again to the random variable $\|M(T)\|_2$, to obtain
\begin{align}
 \mathbb E\sum_iw_iM_i(T)
 &=-\mathbb E\sum_i b_iM_i(T)\notag\\
 &\ge-\frac TN\mathbb E\|M(T)\|_2
 \ge-\frac TN\left(\sum_i\mathbb E M_i(T)^2\right)^{1/2}\notag\\
 &\ge-\frac TB\sqrt{\mathbb E S_T}.\label{low:eq:weighted}
\end{align}
The first line uses $\mathbb E M_i(T)=0$. This estimate permits dependence between $b_i$ and the terminal martingale values and therefore applies to adaptive guesses.

%% file: sections/lower_bound.tex
\subsection{The quantitative lower bound and its asymptotic consequence}\label{sec:quantitative}
We now combine the conditional probability comparison and the martingale estimates to bound the minimum expectation. Keeping the final segment length $B$ as a parameter first gives an inequality for finite decks; a choice of $B$ then proves Theorem~\ref{thm:lower}. The key is to use $\E S_T\le\E S_N\le m$ only for a strategy attaining the minimum expectation, since this upper bound need not hold for an arbitrary strategy.
\begin{proposition}\label{low:thm:main}
For positive integers $m,n$ and an integer $m\le B<mn$, let $N=mn$ and $T=N-B$. Then
\begin{equation}\label{low:eq:finite}
 P^-_{m,n}\ge
 \frac{mT}{N}-\frac TB\sqrt m-\frac{m^2T}{NB}.
\end{equation}
\end{proposition}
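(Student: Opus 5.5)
The plan is to combine the pathwise reconstruction \eqref{low:eq:reconstruction} with the two estimates already proved, applied to a strategy that attains the minimum. By the reduction in Section~\ref{sec:model}, fix a deterministic strategy $A$ with $\E_A S=P^-_{m,n}$. Since the always-same-label strategy scores exactly $m$, we have $\E_A S_N=P^-_{m,n}\le m$. Because $S_T\le S_N$ pathwise, this gives $\E S_T\le m$. This is the only place where minimality is used, and it is essential: the martingale bound \eqref{low:eq:weighted} involves $\sqrt{\E S_T}$, which could be large for an arbitrary strategy.

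Next, take expectations in \eqref{low:eq:reconstruction}:
\[
 \E S_T-\frac{mT}{N}=\E\sum_i w_iM_i(T)+\E\sum_i w_iA_i(T).
\]
\begin{itemize}
\item The drift term is bounded pathwise by \eqref{low:eq:drift}, so its expectation is at least $-m^2T/(NB)$.
\item The martingale term is at least $-(T/B)\sqrt{\E S_T}$ by \eqref{low:eq:weighted}, hence at least $-(T/B)\sqrt m$ by the previous paragraph.
\end{itemize}
Finally, $P^-_{m,n}=\E S_N\ge\E S_T$ because the scores in the last $B$ rounds are nonnegative. Combining these yields \eqref{low:eq:finite}.

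Before writing this out, I would check that the hypotheses of the earlier subsections hold. The condition $m\le B<N$ gives $T\ge1$ and $\eta=m/B\le1$, so \eqref{low:eq:lower} holds for every positive-probability history with $t\le T$; in particular $L\ge B+1>m$. The denominators in $D_i$ and $c_t$ are positive. The main obstacle is not computational: the substantive work is already contained in \eqref{low:eq:drift} and \eqref{low:eq:weighted}. The only subtle point is the order of the argument. One must first specialize to a minimizer so that $\E S_T\le m$ is available, and only then insert this bound into the Cauchy--Schwarz estimate. One must also confirm that $\E M_i(T)=0$, which was used in \eqref{low:eq:weighted}, holds for the fixed deterministic strategy. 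It does, since each $M_i$ is a square-integrable martingale started at zero.
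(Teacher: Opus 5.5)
Your proposal is correct and matches the paper's proof: fix a minimizing deterministic strategy, take expectations in \eqref{low:eq:reconstruction}, insert \eqref{low:eq:drift} and \eqref{low:eq:weighted}, bound $\E S_T\le\E S_N=P^-_{m,n}\le m$, and finish with $P^-_{m,n}\ge\E S_T$. One small correction to your commentary: minimality is convenient but not essential, because for an arbitrary strategy either $\E S_T\le m$ (and the same substitution works) or $\E S_T>m\ge mT/N$ (and the bound holds trivially), so \eqref{low:eq:finite} in fact bounds $\E_A S$ for every strategy $A$.
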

\begin{proof}
Choose a deterministic strategy attaining $P^-_{m,n}$, whose existence was established in Section~\ref{sec:model}. Take expectations in~\eqref{low:eq:reconstruction} and apply~\eqref{low:eq:drift} and~\eqref{low:eq:weighted} to get
\begin{equation}\label{low:eq:self}
 \E S_T\ge\frac{mT}{N}
 -\frac TB\sqrt{\E S_T}-\frac{m^2T}{NB}.
\end{equation}
Always guessing one label has expectation $m$, so this minimizing strategy satisfies $\E S_T\le\E S_N=P^-_{m,n}\le m$. Replace $\sqrt{\E S_T}$ on the right of~\eqref{low:eq:self} by $\sqrt m$, and use $P^-_{m,n}\ge\E S_T$ on the left, to obtain~\eqref{low:eq:finite}.
\end{proof}
To see the role of the truncation length, take $0<\delta<1$ and assume $\delta n\ge1$ and $B=\lceil\delta N\rceil<N$. Then $B\ge m$, and
\[
 \frac{mT}{N}\ge m(1-\delta)-\frac1n,\qquad
 \frac TB\le\frac1\delta,\qquad
 \frac{m^2T}{NB}\le\frac{m}{n\delta}.
\]
Substituting into~\eqref{low:eq:finite} gives
\begin{equation}\label{low:eq:delta}
 P^-_{m,n}\ge m-m\delta-\frac{\sqrt m}{\delta}
 -\frac{m}{n\delta}-\frac1n.
\end{equation}
The four error terms in~\eqref{low:eq:delta} come from truncation, martingale fluctuations, the conditional probability comparison, and integer rounding, respectively. When $n\ge m$, the dominant terms are $m\delta$ and $\sqrt m/\delta$, which are equal at $\delta=m^{-1/4}$.
\begin{proof}[Proof of Theorem~\ref{thm:lower}]
If $1\le m<16$, then $m^{1/4}<2<3$ and hence $m-3m^{3/4}<0$. The lower bound follows directly from $P^-_{m,n}\ge0$. Suppose now that $m\ge16$. Set $\delta=m^{-1/4}$, so $0<\delta\le1/2$ and $\delta n\ge m^{3/4}\ge1$. Also $N=mn\ge256$, and therefore $\lceil\delta N\rceil<\delta N+1\le N/2+1<N$. Thus~\eqref{low:eq:delta} applies and gives
\[
 P^-_{m,n}\ge m-2m^{3/4}-\frac{m^{5/4}}n-\frac1n
 \ge m-2m^{3/4}-m^{1/4}-\frac1n.
\]
Since $m^{1/4}\le m^{3/4}/4$ and $1/n\le1/16\le m^{3/4}/4$, the last two error terms sum to at most $m^{3/4}$, proving the lower bound. The upper bound follows by always guessing one label. Finally,
\[
 0\le1-\frac{P^-_{m,n}}m\le3m^{-1/4}\longrightarrow0,
\]
and the right-hand side is independent of $n$, proving uniform convergence.
\end{proof}

%% file: sections/discussion.tex
\section{Further questions}
We have studied the minimum and maximum expected scores attainable with only yes/no feedback after each guess in a uniformly shuffled multiset deck. The two main conclusions are
\[
 \frac{P^-_{m,n}}{m}\xrightarrow[m\to\infty]{}1
 \quad\text{uniformly for $n\ge m$},
 \qquad
 \sup_{m,n\ge1}\frac{P^+_{m,n}}{m}=e-1.
\]
The first says that, at large multiplicity, even deliberately avoiding correct guesses cannot reduce the expectation by a fixed proportion. The second determines the best multiplicative constant valid for all decks. These answer the two questions posed by Diaconis, Graham, and Spiro~\cite[Conjectures~1 and~2]{DGS}. The proofs bound the extrema without characterizing optimal strategies. The principal remaining questions concern the exact extrema at fixed multiplicity and the true scale of the effect of feedback at large multiplicity.

\subsection{Fixed multiplicity: does the maximum expectation have a limit?}
Sharpness of the universal constant $e-1$ is determined by $m=1$, so it does not measure the loss caused by repeated cards themselves. For each fixed $m>1$, Theorem~\ref{thm:upper} gives a bounded sequence,
\[
 m\le P^+_{m,n}\le m\sum_{k=1}^{n}\frac1{k!}<(e-1)m,
\]
but boundedness does not ensure convergence.
\begin{question}\label{question:fixed-m}
For each fixed integer $m\ge2$, does the limit
\[
 \lim_{n\to\infty}P^+_{m,n}
\]
exist? If so, can its value be determined, particularly in the first repeated-card case $m=2$?
\end{question}
Even for $m=2$, the available bounds do not give an exact answer. The construction of Diaconis, Graham, and Spiro~\cite[Theorem~2]{DGS} and our upper bound give
\[
 2.91\le\liminf_{n\to\infty}P^+_{2,n}
 \le\limsup_{n\to\infty}P^+_{2,n}
 \le2(e-1).
\]
For general fixed $m$, the limiting subsequence length associated with the increasing strategy also gives a lower bound~\cite{CDHSY}. Our upper bound comes instead from another exactly solvable experiment: if the component deck at each position is revealed in advance, the optimal expectation is $mv_n$. Thus one may ask how much expected score is lost by hiding the deck labels. More precisely, does the nonnegative difference
\[
 mv_n-P^+_{m,n}
\]
converge as $n\to\infty$ for fixed $m>1$, and is its limit strictly positive? Since $mv_n\to(e-1)m$, convergence of this difference is equivalent to convergence in Question~\ref{question:fixed-m}. Strict positivity further distinguishes sharpness of the universal constant from the possibility of approaching it at every fixed multiplicity. This formulation connects the exact extremal problem to the sole additional information used in the proof: can a player without deck labels attain the optimum available when those labels are revealed?

\subsection{Large multiplicity: is the loss in the minimum expectation of square-root order?}
Theorem~\ref{thm:lower} determines the first-order term $m$, but not the scale of the loss achievable by avoiding correct guesses. Together with the construction of Diaconis, Graham, and Spiro~\cite[Theorem~1]{DGS}, it gives
\[
 \frac{\sqrt m}{40}
 \le m-P^-_{m,n}
 \le 3m^{3/4}
\]
when $n\ge8m$. The different exponents leave the second-order effect of feedback undetermined.
\begin{question}\label{question:sqrt}
Is there a constant $C>0$, independent of $m,n$, such that for all $n\ge m\ge1$,
\[
 m-P^-_{m,n}\le C\sqrt m\,?
\]
\end{question}
An affirmative answer would establish a loss of order $\Theta(\sqrt m)$ for $n\ge8m$. In the same range, the existing construction and Nie's upper bound already determine the gain in the maximum expectation to be of square-root order for sufficiently large $m$~\cite{DGS,Nie}. Question~\ref{question:sqrt} therefore asks whether seeking and avoiding correct guesses have the same second-order scale. Such symmetry does not follow directly from the definitions of the two strategy classes.

Equation~\eqref{low:eq:delta} also identifies a limitation of the present method. For its two indicated error terms alone,
\[
 m\delta+\frac{\sqrt m}{\delta}\ge2m^{3/4}
 \qquad(\delta>0).
\]
Changing only the truncation point cannot reduce this estimate to square-root order. One possible improvement concerns~\eqref{low:eq:weighted}: we use only the total mass of the terminal weights, without exploiting the distribution of guesses among labels or its joint constraints with the terminal martingale values. Another possibility is to retain more information from the final segment and reduce the loss from truncation. With two labels, yes/no feedback determines the actual card, and the loss does have square-root order; see Appendix~\ref{app:feedback}. For general decks, labels at past failed positions remain unknown. This is the distinction that requires further understanding.

%% file: sections/appendix_counts.tex
\section{From feedback histories to exact counts}\label{app:counts}
The main proof uses only a lower bound for a conditional probability. Exact optimization still does not require the entire feedback history: after successful positions are deleted, the multiplicities and failure counts for each label form a sufficient state. Diaconis, Graham, and Holmes~\cite[Section~4]{DGH} express the same conditional count as the permanent of a matrix with rectangular zero blocks. We give the count and the recursion for the optimal expectation directly in terms of multiset permutations.

After successful positions are deleted, let $r=(r_1,\ldots,r_n)$ be the multiplicities of cards at positions whose labels remain undetermined, and let $f=(f_1,\ldots,f_n)$ count the positions forbidding each label. Put $M=\sum_i r_i$ and $L=M-\sum_i f_i$. For two problems with the same multiplicities and forbidden-position counts, match the positions forbidding each label and then match the free positions. This gives a bijection between the allowed words. Their number therefore depends only on $r,f$; denote it by $C(r,f)$. The following is the multiset form of the inclusion--exclusion formula used for conditional card-guessing counts~\cite[Theorem~2 and equations~(8), (9)]{CDGM}. Write $(r)_k=r(r-1)\cdots(r-k+1)$, with $(r)_0=1$. In the sum below each $k_i$ is an integer, and $K=k_1+\cdots+k_n$.
\begin{proposition}
For nonnegative integer vectors $r,f$ with $\sum_i f_i\le M$,
\begin{equation}\label{app:eq:count}
 C(r,f)=\frac1{\prod_i r_i!}
 \sum_{\substack{0\le k_i\le\min\{r_i,f_i\}\\1\le i\le n}}
 (-1)^K(M-K)!\prod_{i=1}^n\binom{f_i}{k_i}(r_i)_{k_i}.
\end{equation}
If $C(r,f)>0$ and $L\ge1$, the probability that a free position contains label $i$ is
\[
 p_i(r,f)=\frac{C(r-e_i,f)}{C(r,f)}\quad(r_i>0),
 \qquad p_i(r,f)=0\quad(r_i=0),
\]
where $e_i$ has a 1 in coordinate $i$ and zeros elsewhere.
\end{proposition}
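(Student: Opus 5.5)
I will prove the count formula by inclusion--exclusion over distinguishable copies, and then deduce the probability formula by a conditioning argument.

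\emph{Step 1: pass to distinguishable copies.} Give the $r_i$ copies of label $i$ distinct tags $1,\ldots,r_i$. Every allowed word corresponds to exactly $\prod_i r_i!$ arrangements of the $M$ distinguishable cards that respect the same restrictions. So it suffices to count distinguishable arrangements with no card of label $i$ at a position forbidding $i$, and then divide by $\prod_i r_i!$.

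\emph{Step 2: inclusion--exclusion.} A ``bad event'' is a pair (forbidden position $x$ for label $i$, tagged copy $c$ of label $i$), meaning that card $c$ sits at $x$. We apply inclusion--exclusion over sets of bad events that can occur simultaneously. Such a set is a partial matching: distinct positions go to distinct cards.
\begin{itemize}
\item For each $i$, a consistent set with $k_i$ pairs of label $i$ is determined by choosing $k_i$ of the $f_i$ positions forbidding $i$, in $\binom{f_i}{k_i}$ ways, and injecting them into the $r_i$ copies, in $(r_i)_{k_i}$ ways.
\item Pairs of different labels never conflict: positions forbidding distinct labels are distinct, since each position forbids one label, and the cards are distinct.
\item The remaining $M-K$ cards can be placed arbitrarily in $(M-K)!$ ways.
\end{itemize}
The sign is $(-1)^K$. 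The range $0\le k_i\le\min\{r_i,f_i\}$ is automatic, since the binomial or the falling factorial vanishes outside it. This gives \eqref{app:eq:count}. The hypothesis $\sum_i f_i\le M$ just ensures that the positions are well defined.

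\emph{Step 3: the probability formula.} Fix a free position $z$. If $r_i=0$, then label $i$ cannot appear, so the probability is $0$. Otherwise, the allowed words with $w_z=i$ correspond bijectively to allowed words on the remaining $M-1$ positions. In that smaller configuration:
\begin{itemize}
\item the multiplicities are $r-e_i$;
\item the forbidden counts are still $f$, because $z$ was free and the forbidden positions are untouched.
\end{itemize}
By the invariance noted before the statement, the number of such words is $C(r-e_i,f)$. Dividing by $C(r,f)>0$ under the uniform measure gives $p_i$. This count does not depend on which free position $z$ is chosen, consistent with the symmetry used in Lemma~\ref{low:lem:switch}.

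The main point requiring care is Step 2: one must check that simultaneous bad events are exactly the partial matchings within each label, so that the product formula counts each intersection correctly. The rest is routine bookkeeping.
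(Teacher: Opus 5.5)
Your proof is correct and follows the paper's argument. You distinguish the copies, apply inclusion--exclusion with terms $(-1)^K\binom{f_i}{k_i}(r_i)_{k_i}(M-K)!$, divide by $\prod_i r_i!$, and obtain $p_i$ by deleting a free position that carries label $i$. The only difference is cosmetic: you index inclusion--exclusion by (position, copy) partial matchings rather than by sets of violated positions, and this yields exactly the same sum.
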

\begin{proof}
First distinguish cards of the same label. Choose $k_i$ of the $f_i$ positions forbidding $i$, and require those positions to violate their restrictions. There are $\binom{f_i}{k_i}$ choices of positions and $(r_i)_{k_i}$ ways to fill them with distinct cards labelled $i$. Once all $K$ positions are filled, the other cards can be arranged in $(M-K)!$ ways. Apply inclusion--exclusion to the violations and divide by the $\prod_i r_i!$ identity assignments corresponding to each word to obtain~\eqref{app:eq:count}. Finally, fixing label $i$ at a free position and deleting that position gives a bijection with allowed words of multiplicities $r-e_i$ and restriction counts $f$. The ratio of counts is the asserted probability.
\end{proof}
These probabilities also give an exact backward recursion. Let $V^+(r,f)$ and $V^-(r,f)$ be the maximum and minimum expected remaining scores from this state. Both are zero when $L=0$. For $L\ge1$,
\begin{align*}
 V^+(r,f)&=\max_{i\in[n]}
 \{p_i(r,f)[1+V^+(r-e_i,f)]
 +(1-p_i(r,f))V^+(r,f+e_i)\},\\
 V^-(r,f)&=\min_{i\in[n]}
 \{p_i(r,f)[1+V^-(r-e_i,f)]
 +(1-p_i(r,f))V^-(r,f+e_i)\}.
\end{align*}
Branches of probability zero are omitted, so values at unreachable states need not be defined. A success deletes the current position and decreases $r_i$; a failure changes the current free position into a position forbidding $i$. Either transition decreases $L$ by 1, so the recursion terminates at the boundary and includes every choice in the next round. This also proves that the state is sufficient, rather than merely convenient notation for marginal probabilities. The initial state $r=(m,\ldots,m)$, $f=0$ gives $P^\pm_{m,n}$.

For the same yes/no feedback model, Alimohammadi et al.~\cite[Section~4.2 and Appendix~C]{ADRS} estimate or compute these permanents to obtain the conditional probabilities of the next card, and hence to implement the greedy strategy that chooses a most likely label at each round. The recursion above also includes the remaining score after either feedback outcome and therefore optimizes the total score. Solving the conditional counting problem does not remove this distinction. The cards counted by $r_i$ may lie in future positions or may already have been removed after a failed guess. The counts $f_i$ must also be retained to determine the next conditional probability. The switching comparison in the main proof replaces these exact counts by a uniform inequality and thereby avoids solving over all states.

%% file: sections/appendix_feedback.tex
\section{Two solvable cases and a comparison of feedback models}\label{app:feedback}
The lower bound in the main theorem describes growing multiplicities and need not give useful numerical information for small ones. We first give the exact answer for $m=1$, then compute both extrema for two labels. The latter case also reveals the importance of the feedback model: with two labels, knowing whether a guess is correct is equivalent to knowing the card; with more labels, it is not.

\subsection{Each label occurs once}
When $m=1$, after guessing a label correctly, one can keep guessing it and never succeed again. Diaconis and Graham~\cite[Theorem~6]{DG} proved that a minimizing strategy guesses $1,2,\ldots,n$ in order until the first success, then continues guessing that successful label. If $d_n$ denotes the number of permutations of $[n]$ without fixed points, then
\[
 P^-_{1,n}=1-\frac{d_n}{n!}
 =1-\sum_{k=0}^n\frac{(-1)^k}{k!}
 \longrightarrow1-e^{-1}.
\]
Indeed, the score is either 0 or 1, and it is 0 exactly when the original permutation has no fixed points. Inclusion--exclusion gives $d_n=\sum_{k=0}^n(-1)^k\binom nk(n-k)!$, yielding the formula. Together with Lemma~\ref{up:lem:single}, this gives
\[
 P^-_{1,n}\longrightarrow1-e^{-1},\qquad
 P^+_{1,n}\longrightarrow e-1.
\]
There is no contradiction with $P^-_{m,n}/m\to1$: here $m=1$ is fixed and $n$ grows, whereas in the main theorem $m$ tends to infinity.

\subsection{Closed formulas for two labels}
With two labels, an incorrect guess determines the true label, so the remaining multiplicities are known after every round. The current guess does not affect the information available in the next round. Guessing a more numerous label therefore maximizes the expectation, and guessing a less numerous label minimizes it. This is the prediction problem for uniform balanced binary sequences studied by Blackwell and Hodges~\cite[Section~0]{BH}. The calculation below can also be viewed as the specialization of the complete-feedback formulas of Diaconis and Graham~\cite[Section~2]{DG} to two equally numerous labels.
\begin{proposition}\label{app:prop:two}
For every positive integer $m$,
\[
 P^\pm_{m,2}=m\pm\frac12\left(\frac{4^m}{\binom{2m}{m}}-1\right).
\]
In particular, $P^-_{m,2}+P^+_{m,2}=2m$, and
\[
 P^\pm_{m,2}=m\pm\left(\frac{\sqrt{\pi m}}2-\frac12+O(m^{-1/2})\right).
\]
\end{proposition}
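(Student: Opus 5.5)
With two labels, yes/no feedback reveals the card in every round: a success shows $X_t=G_t$, and a failure shows $X_t$ is the other label. So after round $t$ the player knows the remaining counts $(x,y)$ of labels $1,2$, and the next card is label $1$ with probability $x/(x+y)$. The future distribution of the deck depends only on $(x,y)$, not on which label was guessed. Hence the current guess does not affect the state process $(x,y)$, and the recursion of Appendix~\ref{app:counts} collapses to a greedy rule. A maximizer guesses a label with the larger remaining count, and a minimizer guesses one with the smaller count; ties are arbitrary. Summing over rounds gives
\[
 P^+_{m,2}=\E\sum_t\frac{\max(x_t,y_t)}{x_t+y_t},\qquad
 P^-_{m,2}=\E\sum_t\frac{\min(x_t,y_t)}{x_t+y_t},
\]
where $(x_t,y_t)$ are the counts before round $t$ along a uniform balanced sequence. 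Since $\max+\min=x+y$, we get $P^+_{m,2}+P^-_{m,2}=\sum_t1=2m$. It therefore suffices to compute $P^+_{m,2}-m=\tfrac12\bigl(P^+_{m,2}-P^-_{m,2}\bigr)=\tfrac12\E\sum_t |x_t-y_t|/(x_t+y_t)$.

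Next, compute the expected excess of the maximizer over guessing blindly. Read the deck as a lattice path from $(m,m)$ to $(0,0)$. In every round with $x_t\neq y_t$, the conditional probability of success of the greedy guess minus $1/2$ is $|x_t-y_t|/(2(x_t+y_t))$. I would instead use a cleaner pathwise identity: in rounds with $x_t\ne y_t$, the greedy guess gains exactly the net drift, and summing $\ind\{\text{success}\}-\ind\{\text{failure}\}$ over the steps between consecutive visits to the diagonal $x=y$ gives $+1$ for each excursion. The argument is that on an excursion away from the diagonal, the path moves from distance $0$ to distance $1$ and back to distance $0$. The greedy player guesses the majority label, which is correct exactly when the path moves toward the diagonal. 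So within an excursion, correct minus incorrect guesses equals (steps toward) minus (steps away) excluding the first step, which is $+1$. In tie rounds the guess is arbitrary and contributes $1/2$ in expectation. Hence $\E S^+=m+\tfrac12\E[\#\{\text{tie rounds}\}]$, where a tie round is a round at which the path starts on the diagonal. The tie rounds are the visits to $x=y$ with $x\ge1$, that is, the states $(k,k)$ with $1\le k\le m$. I would double-check this against $m=1$: $P^+_{1,2}=3/2$, and there is one tie, so the identity gives $1+\tfrac12=3/2$, which matches.

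It then remains to compute the expected number of visits to the diagonal, $\sum_{k=1}^m\Pp(\text{path passes }(k,k))=\sum_{k=1}^m\binom{2k}{k}\binom{2m-2k}{m-k}/\binom{2m}{m}$. The classical identity $\sum_{k=0}^m\binom{2k}{k}\binom{2m-2k}{m-k}=4^m$ turns this into $4^m/\binom{2m}{m}-1$. This gives $P^+_{m,2}=m+\tfrac12\bigl(4^m/\binom{2m}{m}-1\bigr)$, and $P^-_{m,2}$ follows from the sum identity. The asymptotic follows from $\binom{2m}{m}=\frac{4^m}{\sqrt{\pi m}}(1+O(1/m))$, which gives $4^m/\binom{2m}{m}=\sqrt{\pi m}+O(m^{-1/2})$. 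The main obstacle I expect is making the optimality of the greedy rule rigorous. I would argue it by backward induction on the recursion, using that the continuation value after round $t$ is a function of $(x,y)$ alone, independent of the guess. I would also take care with the excursion bookkeeping at the final excursion, which ends at $(0,0)$ on the diagonal, so it closes like every other excursion.
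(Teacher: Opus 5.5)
Your proposal is correct and follows essentially the paper's route. Both arguments use greedy optimality, justified because yes/no feedback reveals every card with two labels. Both then establish the pathwise fact that exactly $m$ successes occur at unequal states: you get it from excursions of $|x-y|$ together with the count $2m-J$ of non-tie rounds, where $J$ is the number of tie rounds, while the paper tracks the larger remaining count, which drops by one exactly at such successes. The rest is shared: $\frac12\E J$ from the ties, the central-binomial convolution, and Stirling. The only other difference is cosmetic: you obtain $P^-_{m,2}+P^+_{m,2}=2m$ from $\max+\min=x+y$, whereas the paper pairs the maximizer with the strategy that always guesses the other label.
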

\begin{proof}
Use the strategy that guesses a more numerous label each round, choosing either label in a tie. Let $J$ be the number of rounds in which the two remaining multiplicities are equal and nonzero. When they are unequal, a correct guess decreases their maximum by 1, and an incorrect guess leaves it unchanged. When they are equal, their maximum is unchanged regardless of the outcome. The maximum decreases from $m$ to 0, so exactly $m$ successes occur at unequal states. At each equal state the conditional success probability is $1/2$, giving
\[
 P^+_{m,2}=m+\frac12\E J.
\]
This decomposition by equal states also appears in the distributional analysis of two-colour card guessing~\cite[Lemmas~1 and~2]{KPtwo}.

When $2k$ cards remain, the probability of having $k$ of each label is
\[
 \frac{\binom{2k}{k}\binom{2m-2k}{m-k}}{\binom{2m}{m}}
 \qquad(1\le k\le m).
\]
The numerator counts binary arrangements in the last $2k$ and the first $2m-2k$ positions separately. Thus
\[
 \E J=\frac1{\binom{2m}{m}}
 \sum_{k=1}^m\binom{2k}{k}\binom{2m-2k}{m-k}
 =\frac{4^m}{\binom{2m}{m}}-1.
\]
The last equality follows by comparing coefficients of $z^m$ in the formal power series identity $(\sum_{k\ge0}\binom{2k}{k}z^k)^2=(1-4z)^{-1}$; the $k=0$ term is $\binom{2m}{m}$. This proves the formula for the maximum expectation.

On the same card sequence, let the minimizing strategy always choose the other label from the maximizing strategy, including at ties. Each strategy can reconstruct the complete card sequence from its own yes/no feedback, so this pairing is legal. Exactly one succeeds in each round, and their scores sum to $2m$, giving the minimum expectation. Finally, Stirling's formula gives $\binom{2m}{m}=4^m(\pi m)^{-1/2}(1+O(m^{-1}))$, yielding the asymptotic expansion.
\end{proof}
For example,
\[
 (P^-_{1,2},P^+_{1,2})=(1/2,3/2),\qquad
 (P^-_{2,2},P^+_{2,2})=(7/6,17/6).
\]
The correction is already of square-root order for two labels, but the equivalence of feedback models here does not extend to more labels.

\subsection{Complete feedback versus partial feedback}
With two labels, partial and complete feedback provide the same information. For general decks, the extrema satisfy only comparison inequalities. When $m=n$ grows, even the second-order scale of the maximum expectation differs.

For general $n$, let $C^-_{m,n},C^+_{m,n}$ be the minimum and maximum expectations when the actual card is revealed after each round. Complete feedback can simulate yes/no feedback, so
\[
 C^-_{m,n}\le P^-_{m,n}\le m\le P^+_{m,n}\le C^+_{m,n}.
\]
Let $R_i(\ell)$ be the number of cards labelled $i$ among the last $\ell$ cards. Under complete feedback these counts are known before the next guess. Since the current guess does not change future information,
\[
 C^-_{m,n}=\sum_{\ell=1}^{mn}\frac{\E\min_iR_i(\ell)}{\ell},\qquad
 C^+_{m,n}=\sum_{\ell=1}^{mn}\frac{\E\max_iR_i(\ell)}{\ell}.
\]
This explains why the complete-feedback problem reduces to extrema of the remaining multiplicities. He and Ottolini~\cite{HO} study these extrema through the birthday problem for sampling without replacement, and Ottolini and Steinerberger~\cite[Main Theorem]{OS} further analyze the correction when $m,n$ grow together. For example, they prove that, as $m=n\to\infty$,
\[
 C^+_{m,m}=m+\left(\frac\pi{\sqrt2}+o(1)\right)\sqrt{m\log m},
\]
whereas Nie's partial-feedback upper bound~\cite[Theorem~1.3]{Nie} has a correction of only $O(\sqrt m)$ in the same range. Thus the two feedback models already differ at the second-order scale.

For $m=1$, the contrast is more immediate: with $\ell$ distinct cards remaining, guessing any label known to remain succeeds with probability $1/\ell$, so $C^+_{1,n}=\sum_{\ell=1}^n1/\ell$. The maximum expectation under complete feedback diverges with $n$, whereas $P^+_{1,n}<e-1$ remains bounded. The quantity $m-s_i(t)$ in the main proof includes cards at past failed positions and cannot replace the actual remaining count $R_i(\ell)$ here.

%% file: bibliography.tex
\bibliographystyle{project-amsplain}
\bibliography{references}

%% file: main.bbl
\providecommand{\bysame}{\leavevmode\hbox to3em{\hrulefill}\thinspace}
\providecommand{\MR}{\relax\ifhmode\unskip\space\fi MR }
\providecommand{\MRhref}[2]{
  \href{http://www.ams.org/mathscinet-getitem?mr=#1}{#2}
}
\providecommand{\href}[2]{#2}
\begin{thebibliography}{10}

\bibitem{ADRS}
Y.~Alimohammadi, P.~Diaconis, M.~Roghani, and A.~Saberi, \emph{{Sequential
  importance sampling for estimating expectations over the space of perfect
  matchings}}, The Annals of Applied Probability \textbf{33} (2023), no.~2,
  999--1033.

\bibitem{BH}
D.~Blackwell and J.~L. Hodges, Jr., \emph{{Design for the control of selection
  bias}}, The Annals of Mathematical Statistics \textbf{28} (1957), no.~2,
  449--460.

\bibitem{CDGM}
F.~R.~K. Chung, P.~Diaconis, R.~L. Graham, and C.~L. Mallows, \emph{{On the
  permanents of complements of the direct sum of identity matrices}}, Advances
  in Applied Mathematics \textbf{2} (1981), no.~2, 121--137.

\bibitem{CDHSY}
A.~Clifton, B.~Deb, Y.~Huang, S.~Spiro, and S.~Yoo, \emph{{Continuously
  increasing subsequences of random multiset permutations}}, European Journal
  of Combinatorics \textbf{110} (2023), article no.~103708.

\bibitem{DG}
P.~Diaconis and R.~L. Graham, \emph{{The analysis of sequential experiments
  with feedback to subjects}}, The Annals of Statistics \textbf{9} (1981),
  no.~1, 3--23.

\bibitem{DGHS}
P.~Diaconis, R.~L. Graham, X.~He, and S.~Spiro, \emph{{Card guessing with
  partial feedback}}, Combinatorics, Probability and Computing \textbf{31}
  (2022), no.~1, 1--20.

\bibitem{DGH}
P.~Diaconis, R.~L. Graham, and S.~P. Holmes, \emph{{Statistical problems
  involving permutations with restricted positions}}, State of the Art in
  Probability and Statistics, Institute of Mathematical Statistics Lecture
  Notes--Monograph Series, vol.~36, Institute of Mathematical Statistics,
  Beachwood, OH, 2001, pp.~195--222.

\bibitem{DGS}
P.~Diaconis, R.~L. Graham, and S.~Spiro, \emph{{Guessing about guessing:
  practical strategies for card guessing with feedback}}, The American
  Mathematical Monthly \textbf{129} (2022), no.~7, 607--622.

\bibitem{HO}
J.~He and A.~Ottolini, \emph{{Card guessing and the birthday problem for
  sampling without replacement}}, The Annals of Applied Probability \textbf{33}
  (2023), no.~6B, 5208--5232.

\bibitem{KPtwo}
M.~Kuba and A.~Panholzer, \emph{{On card guessing with two types of cards}},
  Journal of Statistical Planning and Inference \textbf{232} (2024), article
  no.~106160.

\bibitem{Nie}
Z.~Nie, \emph{{The number of correct guesses with partial feedback}}, preprint,
  2022, arXiv:2212.08113.

\bibitem{OS}
A.~Ottolini and S.~Steinerberger, \emph{{Guessing cards with complete
  feedback}}, Advances in Applied Mathematics \textbf{150} (2023), article
  no.~102569.

\end{thebibliography}
